
\documentclass[12pt,draft
]{amsart}
\usepackage{amsmath, amsfonts, amssymb, amsthm,epsfig}
\usepackage[utf8]{inputenc}
\usepackage[T1]{fontenc}
\usepackage[obeyDraft,textsize=tiny]{todonotes}
\usepackage[pdftex,bookmarks=true,bookmarksnumbered=true,breaklinks=true, colorlinks=true]{hyperref}	
\usepackage[ 	
capitalise 	
]{cleveref}	
\usepackage{enumerate,nicefrac}
\usepackage{tikz-cd}
\usepackage{adjustbox}

\usetikzlibrary{calc}
\usetikzlibrary{decorations.pathmorphing}

\tikzset{curve/.style={settings={#1},to path={(\tikztostart)
			.. controls ($(\tikztostart)!\pv{pos}!(\tikztotarget)!\pv{height}!270:(\tikztotarget)$)
			and ($(\tikztostart)!1-\pv{pos}!(\tikztotarget)!\pv{height}!270:(\tikztotarget)$)
			.. (\tikztotarget)\tikztonodes}},
	settings/.code={\tikzset{quiver/.cd,#1}
		\def\pv##1{\pgfkeysvalueof{/tikz/quiver/##1}}},
	quiver/.cd,pos/.initial=0.35,height/.initial=0}

\tikzset{tail reversed/.code={\pgfsetarrowsstart{tikzcd to}}}
\tikzset{2tail/.code={\pgfsetarrowsstart{Implies[reversed]}}}
\tikzset{2tail reversed/.code={\pgfsetarrowsstart{Implies}}}
\tikzset{no body/.style={/tikz/dash pattern=on 0 off 1mm}}

\let\BFseries\bfseries\def\bfseries{\BFseries\mathversion{bold}} 
\newcommand{\ind}{1\hspace{-0.098cm}\mathrm{l}}

\def\E{\mathbb{E}}

\theoremstyle{plain}
\newtheorem{thm}{Theorem}
\newtheorem{lem}[thm]{Lemma}
\newtheorem{propo}[thm]{Proposition}
\newtheorem{cor}[thm]{Corollary}

\theoremstyle{definition}

\newtheorem{remark}[thm]{Remark}

\newtheorem{prob}[thm]{Problem}
\renewenvironment{proof}[1][] {\smallskip \noindent {\bf Proof#1.} }{\hspace*{\fill}$\square$\medskip\par}

\def\P{{\bf {\mathbb{P}}}}
\def\Q{{\bf {\mathbb{Q}}}}

\def\R{{\bf {\mathbb{R}}}}

\def\s{\mathbf{s}}
\def\S{\mathbf{S}}
\def\dd{\textup{d}}
\def\E{\mathbb{E}}
\def\PP{\mathbb{P}}
\def\EE{\mathbb{E}}

\def\Ba{B^{\textup{a}}}
\def\Xa{X^{\textup{a}}}

\begin{document}

 \title[Co-ascent processes and Palm calculus]{Self-similar co-ascent processes and Palm calculus}
 \author{Christian M\"onch}\thanks{Part of this research was funded by DFG grant no. 443916008}
\address{Institut f\"ur Mathematik\\ Johannes Gutenberg-Universit\"at Mainz\\ Staudingerweg\ 9\\ 55099 Mainz\\ Germany}
\email{cmoench@uni-mainz.de}
 \date{\today}
 \maketitle
 \begin{abstract}
 We discuss certain renormalised first passage bridges of self-similar processes, generalising the ``Brownian co-ascent process'' discussed by Panzo (S{\'e}m. Prob. L, 2019) and introduced by Rosenbaum and Yor (S{\'e}m. Prob. XLVI, 2014). We provide a characterisation of co-ascent processes via Palm measures, namely that the co-ascent of a self-similar process is the process under the Palm distribution associated with its record measure. We use this representation to derive a distributional identity for $\alpha$-stable Lévy-subordinators with $\alpha\in(0,1)$.
 \end{abstract}
 \noindent {\bf 2020 Mathematics Subject Classification:} 60G18 (primary);\\ 60G57 (secondary)
 \vspace{0.5cm}
 
 \noindent {\bf Keywords:} Brownian ascent, Brownian meander, first passage times, self-similar processes, Palm distribution, stable subordinator

\section{Introduction}
\subsection*{Overview} In this note we discuss renormalised first passage bridges, called \emph{co-ascent processes}, derived from self-similar continuous processes. We argue that these co-ascent process can be represented via Palm measures in a natural way for a large class of self-similar processes and that a suitably rescaled co-ascent process is the canonical choice to define the distribution of a ``typical'' sample path under the condition that the processes is at its running supremum at a given fixed time.

An important tool in our argumentation is the Palm distribution associated with the record measure of a given process. If the underlying process possesses some shift-invariance property, such as stationary increments, the record measure is usually not well behaved with respect to shifts of the underlying path space. This complicates the use of standard results from Palm theory. It is therefore natural to work with self-similar processes and use rescalings instead of shifts on the path space. A logarithmic change of coordinates then allows us prove our results in the more familiar framework of shifts. Our main result, Theorem \ref{thm:ascispalm}, can be informally stated as ``The co-ascent process $\Xa$ of a self-similar process $X$ is Palm distributed with respect to the record measure $\mu$ of $X$''. This yields a natural interpretation of co-ascent processes in terms of Palm distributions. As an application of our results, we derive a novel distributional identity for self-similar pure jump processes in Corollaries~\ref{cor:12subordinator} and~\ref{cor:sssubordinator}.

Let us now briefly discuss the background of the present work, first in the context of path transformations of Brownian motion and then in the context of Palm theory for stochastic processes.
\subsection*{Brownian co-ascent and related processes}
As a motivational example, we first consider the Brownian case. Let $B=(B_t)_{t\geq 0}$ be a standard Brownian motion. Recall that $B$ is the unique continuous Gaussian process which satisfies $\EE B_1^2=1$, has stationary increments and is also $\nicefrac{1}{2}$-self-similar, i.e. for any $c>0$,
\begin{equation*}
\left(\frac{1}{\sqrt{c}}B_{ct} \right)_{t\geq 0}\overset{d}{=}(B_t)_{t\geq 0}.
\end{equation*}
where $\overset{d}{=}$ denotes equality in distribution. Let $T_1=\inf\{s>0: B_s=1\}$ denote the first hitting time of level $1$. We call the process $\Ba$ defined by
\begin{equation}\label{eq:fpb1}
\Ba_t:= \frac{1}{\sqrt{T_1}}B_{{t}{T_1}}, \quad t\geq 0,
\end{equation}
the \emph{(extended) co-ascent process} associated to $B$. We use the qualifier `extended' because we consider $\Ba_t$ for all positive times, whereas originally the definition only included the interval $[0,1]$. 
The name Brownian co-ascent for the process $(\Ba_t)_{0\leq t\leq 1}$ was coined recently by Panzo \cite{panzo2018scaled}, who established the relation \[(\Ba_s)_{0\leq s\leq 1}=(m_{1}-m_{1-s})_{0\leq s\leq 1},\] where $(m_s)_{0\leq s\leq 1}$ is the Brownian co-meander, which is obtained by running the Brownian excursion straddling $1$ backwards from its endpoint to time $1$ and rescaling it to unit duration. Hence the Brownian co-ascent process can be related to the Brownian co-meander in the same fashion as the Brownian ascent is related to the Brownian meander. We refer the reader to \cite{panzo2018scaled} for a discussion of these different path transformations of Brownian motion and a more comprehensive overview of their relations with each other. Related constructions for Brownian motion had been amply investigated before: let $U\in[0,1]$ be uniform and independent of $B$ and define the random variable 
\[
\alpha= \Ba_U.
\]
The distribution of $\alpha$ was studied by Elie et al. in \cite{elie2014expectation} and shown to appear in many interesting distributional identities for functionals derived from Brownian motion, see also \cite{rosenbaum2014law,rosenbaum2015some}. The random variable $\alpha$ is intimately connected to the {pseudo-Brownian bridge} introduced by Biane et al. in \cite{biane1987processus}, which is formally obtained by replacing the first hitting time $T_1$ in the definition \eqref{eq:fpb1} by the first hitting time of level $1$ of Brownian local time at $0$. Note further that, conditional on $\Ba_1=\lambda$, the co-ascent process is a Brownian first passage bridge to level $\lambda$. These processes are discussed e.g.\ in \cite{bertoin2003path}.

It is immediate from an application of the strong Markov property to $B$ at the stopping time $T_1$ that $(\Ba_{1+s}-\Ba_1)_{s\geq 0}$ is a Brownian motion independent of $(\Ba_{s})_{0\leq s\leq 1}.$ It is also straightforward to see that $\Ba$ cannot be a self-similar process, because it achieves its running maximum at $t=1$, but any space-time rescaling by a non-trivial factor yields a process that a.s.\ does not have this property. However, we will see below that $\Ba$ is self-similar under rescaling by first hitting times. This is a manifestation of a well known feature of Palm distributions of diffuse random measures called {mass-stationarity}, see \eqref{eq:ms} below.

\subsection*{Palm theory and some of its applications}
Palm calculus was originally developed to study inter-arrival times in point processes \cite{palm1943intensitatsschwankungen}. Later, Mecke \cite{mecke1967stationare} generalised the notion to random measures on locally compact Abelian groups. The idea of applying Palm calculus to random measures (or equivalently additive functionals) derived from stochastic process is also classical, in particular in the study of local times of Markov process, see e.g.\ \cite{geman1973remarks} and the references there and for stationary processes, see e.g.\ \cite{geman1973a}. In the late 1980's, Z\"ahle developed a general method to study fractal properties of a large class of measures obtained from general self-similar processes with stationary increments \cite{Z3}, based on the Palm calculus of self-similar random measures put forward in \cite{Z1,Z2}.

More recently, in \cite{heveling2005characterization,heveling2007point,
	last2009invariant,last2014unbiased,last2015construction},
 Last et al. proved a number of characterisation theorems for Palm measures, some of which we apply in Section 2 in the self-similar setting.

From the point of view of applications in the context of stochastic processes, Palm theory has proven very fruitful in tackling problems related to embedding distributions (of random variables or random functions) into Brownian paths, see \cite{last2014unbiased, pitman2015slepian,morters2017optimal,last2018transporting}, and also \cite{morters2016skorokhod} for an application in discrete time. For non-Markovian processes, a related technique based on Z\"ahle's approach in \cite{Z3} has been employed in \cite{Moe18} to derive the persistence exponent for local times of self-similar processes with stationary increments.

In all examples above, Palm measures are defined for processes exhibiting some shift-invariance property such as stationarity or stationarity of increments. Since we deal with self-similar processes, we need to perform most of our calculation under a logarithmic rescaling of space and time. Note that we focus on record measures of self-similar processes, which possess (even in the stationary or stationary increment case) no inherent shift-invariance, unlike e.g.\ occupation measures or local times.

Via the first hitting time $T_1$ in \eqref{eq:fpb1}, record measures are related to the co-ascent process. Below, we identify $T_1$ as `typical' record time in the sense advocated by Last and Thorisson in \cite{last2011typical} and intimately connected to Palm measures. In fact, Palm measures are often described intuitively as `having a typical point at the origin', which is the point $1$ in our set up. The co-ascent process is the original process seen from a typical record, or more aptly, seen on the \emph{scale} of a typical record.

We remark that we only treat co-ascents to positive levels, i.e. positive records but all arguments carry over to the case of `descent processes' and negative records by considering $-X$ instead of the process $X$.
%

\subsection*{Outline of the following sections}
We develop the general setup and prove our main results in Section~2. Section~3 is devoted to some concluding remarks and open questions. 
\section{General set up and results}
\subsection*{Co-ascent processes}
Let us now introduce co-ascent processes in a general form. Throughout $X=(X_{t})_{t\geq 0}$ is $H$\emph{-self-similar} for some $H\in(0,1)$, i.e. for any $c>0$ we have
\begin{equation}\label{eq:Hss}
(c^{-H}X_{ct})_{t\geq 0}\overset{d}{=} (X_t)_{t\geq 0}.
\end{equation}
We further assume that $X$ is ergodic and a.s.\ admits a version with continuous paths. We always identify $X$ with this version. Note that continuity and self-similarity imply that necessarily $X_0=0$ a.s. The running supremum process $M=(M_t)_{t\geq 0}$ with $M_t=\sup_{0\leq s\leq t} X_s$ is assumed to satisfy $\PP(M_1>0)=1$. In this case, \eqref{eq:Hss} implies that $M_{\varepsilon}>0$ a.s.\ for any $\varepsilon>0$ and that $\lim_{t\to\infty} M_t=\infty$ a.s. We set $T_x=\inf\{t\geq 0: X_t> x\}, x\geq 0$ and note in passing, that $M$ is $H$-self-similar, $T=(T_x)_{x\geq 0}$ is the right-continuous inverse of $M$ and, consequently, $T$ is $\nicefrac1H$-self-similar.
The \emph{(extended) co-ascent process} $\Xa$ of $X$ is given by
\begin{equation}\label{eq:defXa}
\Xa_t= T_1^{-H}X_{T_1 t},\quad t\geq 0.
\end{equation}
To conclude our introduction of $\Xa$, we remark that the choice of the passage time in \eqref{eq:defXa} plays no role.
\begin{lem}\label{prop:levelind}
Fix $x>0$ and set $\Xa_t(x)= T_x^{-H}X_{T_x t}, t\geq 0.$ The corresponding process $\Xa(x)$ is equal in distribution to $\Xa(1)=\Xa$.
\end{lem}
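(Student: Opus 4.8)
The plan is to exploit the $H$-self-similarity of $X$ directly, using the scaling relation \eqref{eq:Hss} to relate the two first-passage times $T_x$ and $T_1$, and hence the two renormalised processes $\Xa(x)$ and $\Xa(1)$. The key observation is that hitting level $x>0$ should, up to a deterministic scaling, look like hitting level $1$: if we rescale space by $x$ and time by the appropriate power of $x$, the level-$x$ passage problem for $X$ becomes the level-$1$ passage problem for a process equal in distribution to $X$.

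First I would make the scaling substitution explicit. Fix $x>0$ and set $c=x^{1/H}$, so that $c^{-H}=x^{-1}$. Define the rescaled process $Y_t:=c^{-H}X_{ct}=x^{-1}X_{x^{1/H}t}$. By \eqref{eq:Hss} we have $Y\overset{d}{=}X$ as processes on $[0,\infty)$. Next I would track how the first-passage times transform under this substitution. Since $Y_t=x^{-1}X_{x^{1/H}t}$, the event $\{Y_t=1\}$ is the event $\{X_{x^{1/H}t}=x\}$, so the first hitting time of level $1$ by $Y$, call it $T_1^Y=\inf\{t\geq0:Y_t=1\}$, satisfies $T_1^Y=x^{-1/H}T_x$, where $T_x$ is the first passage time of level $x$ by the original process $X$. (This is precisely the $\nicefrac1H$-self-similarity of $T$ noted in the text, applied pointwise.)

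The main computation is then to substitute these relations into the definition \eqref{eq:defXa} of the co-ascent and check that the $x$-dependent scaling factors cancel. Writing out $\Xa(x)_t=T_x^{-H}X_{T_xt}$ and replacing $X$ by its expression in terms of $Y$ via $X_s=xY_{x^{-1/H}s}$ and $T_x=x^{1/H}T_1^Y$, I expect everything to collapse: one finds
\begin{equation*}
\Xa(x)_t=T_x^{-H}X_{T_xt}=\bigl(x^{1/H}T_1^Y\bigr)^{-H}\,x\,Y_{x^{-1/H}T_xt}=(T_1^Y)^{-H}Y_{T_1^Y t},
\end{equation*}
where in the last step the factor $x^{-1}$ from $(x^{1/H})^{-H}$ cancels the explicit $x$, and $x^{-1/H}T_x=T_1^Y$. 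Thus $\Xa(x)$ is exactly the co-ascent functional applied to the process $Y$.

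To conclude I would appeal to the distributional identity $Y\overset{d}{=}X$. Since $\Xa(x)$ is obtained from $Y$ by the \emph{same} measurable path functional $\Phi\colon f\mapsto\bigl(T_1(f)^{-H}f(T_1(f)\,\cdot)\bigr)$ that produces $\Xa=\Xa(1)$ from $X$, and since $Y\overset{d}{=}X$, we get $\Xa(x)=\Phi(Y)\overset{d}{=}\Phi(X)=\Xa(1)$. The only genuine subtlety I anticipate is the measurability/continuity of the functional $\Phi$ — specifically that $f\mapsto T_1(f)$ and the composed map are measurable on path space so that equality in distribution of $X$ and $Y$ transfers to equality in distribution of $\Phi(X)$ and $\Phi(Y)$. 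Given the standing assumptions ($X$ has continuous paths, $M_1>0$ a.s. and $\lim_{t\to\infty}M_t=\infty$ a.s., so $T_x\in(0,\infty)$ a.s.), $T_1$ is an a.s.\ finite, positive, measurable functional of the path, and this measurability step is routine; the algebraic cancellation of the scaling factors is the heart of the argument.
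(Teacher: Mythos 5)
Your proposal is correct and follows essentially the same route as the paper: both define the rescaled process $Y_t = x^{-1}X_{x^{1/H}t}$, identify its level-one passage time as $x^{-1/H}T_x$ via self-similarity, and verify by the same algebraic cancellation that the co-ascent functional applied to $Y$ reproduces $\Xa(x)$, so that $Y\overset{d}{=}X$ yields the claim. The paper phrases the final step as the distributional identity of the pair $(Y,S)\overset{d}{=}(X,T)$ rather than invoking measurability of the functional $\Phi$ explicitly, but this is the same argument.
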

\begin{proof}
This is a direct consequence of the scaling property \eqref{eq:Hss}. Consider the pair $(Y,S)$ given by \[
\left(Y_s  , S_y \right)= \left(\frac1x X_{x^{1/H}s}, \frac{1}{x^{1/H}}T_{yx} \right),\quad s\geq 0, y\geq 0,
\]
and observe that $S_y,y\geq 0$ are precisely the first passage times of the space-time rescaled process $Y$. On one hand we have $\smash{(Y,S)\overset{d}{=}(X,T)}$ by self-similarity and consequently $\smash{\Xa\overset{d}{=}Y^{\textup{a}}}$ and on the other hand we have
\[
Y^{\textup{a}}_s= S_1^{-H}Y_{S_1s}= x T_x^{-H}\; x^{-1} X_{x^{1/H}s\,x^{-1/H}T_x}=\Xa_s(x),\quad s\geq 0,
\]
which concludes the proof.
\end{proof}

$(\Xa_t)_{0\leq t\leq 1}$ can be interpreted as the rescaled ascension of $X$ to a `typical level'. The rescaling removes information about the specific choice of the level $x$ in the sense that the original path of $X$ can be recovered from $\Xa$ given $x$, but not without the knowledge of $x$. Although $\Xa$ is not $H$-self-similar, it is $H$-self-similar under rescaling by first passage times.
\begin{thm}\label{thm:projection}
	Let $X$ be a continuous $H$-self-similar ergodic process with running supremum $M$ where $\E M_1<\infty$. Then \[
	(\Xa)^{\textup{a}}\overset{d}{=}\Xa.
	\]	
\end{thm}
Theorem~\ref{thm:projection} is derived below as a consequence of Theorem~\ref{thm:ascispalm}. Note that even in the Brownian case, Theorem~\ref{thm:projection} has non-trivial consequences.
\begin{cor}\label{cor:12subordinator}
Let $(S_{x})_{x\geq 0}$ denote the $\nicefrac{1}{2}$-stable Lévy subordinator renormalised such that $S_1\overset{d}{=}\inf\{s: B_s>1\}$, where $B$ is standard Brownian motion. Then we have the distributional identity
\[
\frac{1}{S_1}S_{{S_1^{1/2}}}\overset{d}{=}S_1.
\]
\end{cor}
Corollary~\ref{cor:12subordinator} is a special case of the following more general observation.
\begin{cor}\label{cor:sssubordinator}
	If $(S_{x})_{x\geq 0}$ is a strictly increasing $\nicefrac{1}{H}$-self-similar pure jump process, then
	\[
	\frac{1}{S_x}S_{x{S_x^{1/H}}}\overset{d}{=}S_x,\quad x>0.
	\]
\end{cor}
\begin{proof}
We first deal with $x=1$. Since $S$ is pure jump and monotone, its right continuous inverse $Y=S^{-1}$ is well-defined and continuous. Moreover, $Y$ is $H$-self-similar and we have $Y^{\textup{a}}_1=S_1^{-H}$, whilst \begin{align*}
(Y^{\textup{a}})^{\textup{a}}_1 & = \inf\{ t: Y^{\textup{a}}_t>1 \}^{-H} =  \inf\{ t: Y_{tS_1}>S_1^{H} \}^{-H} \\ & = S^{H}_1\inf\{ s: Y_{s}>S_1^{H} \}^{-H},
\end{align*}
hence by Theorem~\ref{thm:projection}
\[
S_1^{-H}\overset{d}{=}\left(\frac{1}{S_1}S_{S^{H}} \right)^{-H},
\]
and the assertion follows. For general $x$, the the same calculation for the process $Y^{\textup{a}}(x)$ defined in Lemma~\ref{prop:levelind} yields the result.
\end{proof}
%
%

\subsection*{Processes and measures with invariance properties}
We now take a more abstract point of view that does not only take the process $X$ into account but also associated measures. For technical reasons, we switch between self-similar and stationary processes, thus we introduce both settings. Let $(\Omega,\mathcal{A})$ denote a measure space and $((G,\ast),\mathcal{G})$ a measurable group that acts measurably on $\Omega$, i.e.\ $\omega\mapsto g\omega$ is $\mathcal{A}$-measurable for any fixed $g\in G$ and $(\omega,g)\mapsto g\omega$ is $\mathcal{A}\otimes\mathcal{G}$-measurable. A measure $\P$ on $(\Omega,\mathcal{A})$ is called \emph{invariant}, if $\P\circ g=\P$ for all $g\in G$. Let now $(\hat\Omega,\hat{\mathcal{A}})$ denote another measure space on which a measurable group $(\hat{G},\bullet)$ acts measurably and assume that there is an injective morphism $\smash{\hat{G}\overset{i}\hookrightarrow G}$. The random variable $X:\Omega\to\hat{\Omega}$ is called $\hat{G}$-\emph{covariant}, if
\[
gX(\omega)=X(i(g)\omega),\quad g\in\hat{G},\omega\in\Omega,
\]
or, more compactly,
\[
X\circ g = i(g)\circ X, \quad g\in\hat{G}.
\]
Throughout, we assume that $G$ has a subgroup $G'$ which is isomorphic to $(\R,+)$, which we represent as $G'=(g_s,s\in\R)$ with $g_s\ast g_t=g_{s+t}$ for all $s,t\in\R$. Let $\mathcal{C}_0(\R_{\geq 0},\R)$ denote the space of all continuous functions $f$ on $\R_{\geq 0}$ satisfying $f(0)=0$, equipped with the topology of uniform convergence and the corresponding Borel-$\sigma$-field $\mathcal{F}_0$. For fixed $H\in(0,1)$ we consider the group $\S(H)=(\s_r)_{r>0}$ of scaling maps acting on $\mathcal{C}_0(\R_{\geq 0},\R)$ via
\[
(\s_r f)_t:= r^{-H}f_{rt},\quad t\geq 0, r>0, f\in\mathcal{C}_0(\R_{\geq 0},\R).
\] 
Any $H$-self-similar process can now be obtained via a corresponding $\S(H)$-covariant random variable $X:\Omega\to\mathcal{C}_0(\R_{\geq0},\R)$ from an invariant probability measure $\P$ on $\Omega$. Note that for this identification we may and shall always use the injection $\s_r\mapsto g_{\log r},r>0$. The corresponding distribution $\P\circ X^{-1}$ on $\mathcal{C}_0(\R_{\geq0},\R)$ is then $H$-\emph{scale-invariant}, since for any $r>0$,
\[
\P(\s_rX(\omega)\in F) =\P(X(g_{\log r}\omega)\in F)= \P(X(\omega)\in F),\quad F\in\mathcal{F}_0.
\]
Similarly, we obtain any stationary process via a corresponding $\Theta$-covariant map $\hat{X}:\Omega\to\mathcal{C}(\R,\R)$, where $\mathcal{C}(\R,\R)$ is the space of all continuous functions equipped with the topology of uniform convergence and the corresponding Borel-$\sigma$-field $\mathcal{F}$. Here $\Theta=(\theta_t)_{t\in\R}$ is the group of shifts acting on paths via
\[
(\theta_tf)_s=f_{s+t},\quad s\in\R,t\in\R,f\in\mathcal{C}(\R,\R),
\]
and we canonically embed $\Theta$ into $G$ through $\theta_t\mapsto g_t$. Define now, for fixed $H\in (0,1)$, the map $L_H:\mathcal{C}_0(\R_{\geq 0},\R)\to\mathcal{C}(\R,\R)$ via
\[
L_H f_t=\textup{e}^{-Ht} f_{e^{t}},\quad t\in\R.
\]
It is well known, see \cite{lamperti1962semi}, (and straightforward to check) that $\hat{\P}$ on $\mathcal{C}(\R,\R)$ is \emph{shift-invariant} if and only if $\hat{\P}=\Q\circ L_H^{-1}$ for some $H$-scale invariant $\Q$ on $\mathcal{C}_0(\R_{\geq 0},\R)$. In particular, $L_H$ induces a 1-to-1 correspondence between stationary processes and self-similar processes, if we define its inverse as
\[
L_H^{-1}f_0=0,\; L_H^{-1}f_t=t^H f_{\log t},\quad t>0, f\in\mathcal{C}(\R,\R).
\]
To save notation, we write for the remainder of the paper $\hat{X}=L_H\circ X$, i.e.\ if $X$ is an $H$-self-similar process, then $\hat{X}$ is its stationary \emph{Lamperti representation}.

Let now $\mathcal{M}$ denote the space of all locally finite measures over the real Borel-$\sigma$-field $\mathcal{B}(\R)$ and let $\mathcal{M}_0$ denote the space of all locally finite measures $m$ on $\mathcal{B}(\R_{\geq 0})$ with $m(\{0\})=0$. Both $\mathcal{M}$ and $\mathcal{M}_0$ are equipped with their Borel-$\sigma$-fields. Random variables $\xi:\Omega\to \mathcal{M}$, $\xi^0:\Omega\to \mathcal{M}_0$ are \emph{random measures} and we denote their associated additive functionals (or distribution functions) by
\[
\xi^0_t =\xi^0((0,t]), t\geq 0,\quad \text{and}\quad \xi_t= \begin{cases} \xi((0,t]), & t\geq 0\\ \xi((-t,0]), & t<0.\end{cases}
\]
We fit random measures into the framework above by saying that $\xi^0$ is $H$-self-similar, if it is $\S(H)$-covariant and that $\xi$ is stationary, if is $\Theta$-covariant. Here, the actions of $\S(H)$ and $\Theta$ on measures are formally defined via their actions on the corresponding additive functional, but it is straightforward to verify that this corresponds to the usual definition of stationary/self-similar random measure. Note, however, that the Lamperti representation $L_H$ does not apply directly to random measures via their additive functionals, since $L^{-1}_H$ does not map monotone functions to monotone functions. Hence, in case of a self-similar random measure $\xi$, we reserve the notation $(\hat\xi_t)_{t\in\R}$ for the additive functional of the measure $\hat\xi$ given by
 \[
\hat\xi (C):=\bar{L}_H \xi(C):= \int_0^\infty \ind_C(\log x) x^{-H}\xi (\dd x),\quad C\in \mathcal{B}(\R).
\]
Indeed, it is not difficult to show, that the map $\bar{L}_H$ thus defined extends the Lamperti representation to measures, and in particular provides a one-to-one correspondence between self-similar random measures on $\R_{\geq 0}$ and stationary random measures on $\R$, cf.\ \cite[1.3]{Z1}.
 
Our set up so far is summarised in the following diagram:

\begin{equation*}
\adjustbox{scale=0.9,center}{%
	\begin{tikzcd}
	&&& \Omega \\
\\
\\
{\mathcal{C}_0(\R_{\geq 0},\R)\times \mathcal{M}_0} &&&&&& {\mathcal{C}(\R,\R)\times\mathcal{M}}
\arrow["{(X,\xi)}"{description}, curve={height=24pt}, from=1-4, to=4-1]
\arrow["{(\hat{X},\hat{\xi})}"{description}, curve={height=-24pt}, from=1-4, to=4-7]
\arrow["{(L_H,\bar{L}_H)}"{description}, curve={height=12pt}, from=4-1, to=4-7]
\arrow["{(L_H^{-1},\bar{L}_H^{-1})}"{description}, curve={height=12pt}, from=4-7, to=4-1]
	\end{tikzcd}
}
\end{equation*}
The identification via $L_H$ allows us to infer all necessary results about self-similar processes and measures straightforwardly from their stationary counterparts.
\begin{remark}
	Of course, $\S(H)$ can be identified with $(\R_{>0},\cdot)$ and $\Theta$ with $(\R,+)$ and we could let either group act directly on the path spaces via appropriate definitions (for $H$ fixed). Furthermore, an equivalent approach is to simply set $(\Omega,G)=(\mathcal{C}(\R,\R)\times\mathcal{M},\Theta)$ and further $(\hat{\Omega},\hat{G})=(\mathcal{C}_0(\R_{\geq 0},\R)\times\mathcal{M}_0,\S(H))$ and use the push forward $(L_H^{-1},\bar{L}_H^{-1},\exp)$ for function, measure and group to infer results on self-similar random variables from stationary ones. We chose the above set up with the abstract space $\Omega$ and the $\R$-isomorphic subgroup $G'$ to emphasise the symmetry between the stationary and self-similar world.
\end{remark}

\subsection*{Random measures and Palm distributions}
We now work with a fixed invariant `reference' measure $\P$ (note that $\P$ is assumed to be $\sigma$-finite but does not need to be a probability distribution) on $\Omega$ and denote the corresponding integral/expectation by $\E$. Let $(Z,\zeta)$ denote a stationary pair, i.e. the process $Z=(Z_t)_{t\in\mathbb{R}}$ and the random measure $\zeta\in\mathcal{M}$ are shift-covariant maps. We say $(Z^\circ,\zeta^\circ)$ is a \emph{Palm version} of $(Z,\zeta)$, if for all non-negative measurable functions $h$ and all compact $A\subset \R$ of positive Lebesgue measure $\lambda(A)>0$,
\begin{equation}\label{eq:refinedCamp}
{\EE}(h(Z^\circ,\zeta^\circ))={\EE}\left[
\int_A h(\theta_{-r}(Z,\nu))\zeta (\dd r)
\right] \lambda(A)^{-1}.
\end{equation}
 Note that here and in the self-similar case, we always interpret the action of a group element such as $\theta_{-r}$ on the pair $(Z,\zeta)$ component-wise, i.e.\ $$\theta_{-r}(Y,\zeta)=(\theta_{-r}Y,\theta_{-r}\nu).$$ 
 \begin{thm}\label{thm:ascispalmweak}
 Let $X$ be a self-similar ergodic process with supremum process $M$, where $\E M_1<\infty$. Let $\mu$ denote the record measure of $X$, i.e.\ $\mu_t=M_t$, $t\geq 0$. Define a measure $\mu^{\textup{a}}$ via $\mu^{\textup{a}}_t=M^\textup{a}_t$. Then $(\hat{\Xa},\hat{\mu^\textup{a}})$ is a Palm version of $(\hat{X},\hat{\mu})$.
 \end{thm}
Before we prove Theorem~\ref{thm:ascispalmweak} below, we develop a few more tools. One can interpret the relation between the random pairs $(Z,\zeta)$ and $(Z^\circ,\zeta^\circ)$ in $\eqref{eq:refinedCamp}$ under $\PP$ as a change of measure formula. The measure $\Q_\zeta$ satisfying
\begin{equation}\label{eq:Palm}
\int h \,\dd \Q_\zeta =\EE\left[
\int_A h\circ g_{-r}\,\zeta (\dd r)
\right] \lambda(A)^{-1},
\end{equation}
for given $A$ as above and any measurable $h:\Omega\to\R_{\geq 0}$ is called the \emph{Palm measure} of $\P$ with respect to $\zeta$. The measure $\Q_\zeta$ is not necessarily a probability measure, but it is easily seen that by stationarity the right hand side of \eqref{eq:Palm} does not depend on the choice of $A$ and $\Q_\zeta$ is unique up to multiplication by a constant. If $\Q_\zeta(\Omega)$ is finite, then $\PP^\circ_\zeta(\cdot) = \Q_\zeta(\Omega)^{-1}\Q_\zeta(\cdot)$ is called the \emph{Palm distribution} of $\zeta$ (with respect to $\PP$), its associated expectation is denoted by $\EE^\circ_\zeta.$ Equivalently, if $\E \zeta_1<\infty$ and $A$ in \eqref{eq:Palm} has unit length, then $\PP^\circ_\zeta=(\EE \zeta_1)^{-1}\Q_\zeta$.

\begin{proof}[{ of Theorem~\ref{thm:ascispalmweak}}]
It suffices to show that $(\hat{\Xa},\hat{\mu^{\textup{a}}})$ is Palm distributed with respect to $\hat\mu$. Note that a Palm distribution exists, since the intensity measure of $\mu$ has Lebesgue density $(1-H)\E M_1 x^{H-1}$ due to self-similarity and thus $\hat\mu$ has finite intensity $(1-H)\E M_1$. Let $A\in\mathcal{B}(\mathcal{C}(\R,\R))\otimes\mathcal{B}(\mathcal{M})$ be an arbitrary event. Under the Palm distribution with respect to $\hat\mu$, the probability of $A$ can be written as 
\begin{align}\label{eq:Jz1}
\P^{\circ}_{\hat\mu}(A)& = \E\left[ \frac{\int_0^t\ind_A\circ\theta_{-s}\,\hat{\mu}(\dd s)}{t} \right]\,\frac{1}{\E \hat{\mu}_1} =\E\left[ \frac{\int_0^{\hat{\mu}_t} \ind_A\circ\theta_{-\hat{\mu}^{-1}_r}\,\dd r}{t} \right]\,\frac{1}{\E \hat{\mu}_1}
\end{align}
for arbitrary $t>0$. Set 
\[
J(z)=\int_0^{z} \ind_A\circ\theta_{-\hat{\mu}^{-1}_r}\,\dd r, \quad z>0.
\]
Assume for the moment, that $\EE[J(z\E\hat{\mu}_1)/z]\equiv \tilde J$ does not dependent on $z$. Now we fix $\delta>0$ and set \[{E}(\delta) = \Big\{1-\delta \leq \frac{\hat{\mu}_t}{t\E\hat{\mu}_1}\leq 1+\delta \Big\}.\] We have that 
\[
\P^{\circ}_{\hat\mu}(A)\E\hat{\mu}_1=\lim_{t\to\infty}\E\Big[\frac{J(\hat{\mu}_t)}{t}\ind_{{E}(\delta)} \Big],
\]
and
\[
\tilde{J}=\lim_{t\to\infty}\E\Big[\frac{J(t\E\mu_1)}{t}\ind_{{E}(\delta)} \Big],
\]
by dominated convergence, since $t^{-1}\hat{\mu}_t\to\E\hat{\mu}_1$ a.s.\ due to ergodicity and since the integrand is bounded by $\E\hat{\mu}_1(1+\delta)$ in both expressions. Further, we have for all $t>0$
\[
(1-\delta)\E\Big[\frac{J(t\E\mu_1(1-\delta))}{(1-\delta)t}\ind_{{E}(\delta)} \Big]\leq \E\Big[\frac{J(\hat{\mu}_t)}{t}\ind_{{E}(\delta)} \Big]\leq (1+\delta)\tilde{J},
\]
by monotonicity of $J$. In the limit $t\to\infty$ we thus obtain
\[
(1-\delta)\frac{\tilde{J}}{\E\hat{\mu}_1}\leq \P^{\circ}_{\hat\mu}(A) \leq (1+\delta)\frac{\tilde{J}}{\E\hat{\mu}_1},
\]
and since $\delta$ was arbitrary, we conclude that $\tilde{J}/\E{\hat{\mu}_1}=\P^{\circ}_{\hat\mu}(A).$ Now we inspect the expectation of $J(z)/z$ and see that for all $z>0$
\[
\E\left[ \frac{\int_0^{z} \ind_A\circ\theta_{-\hat{\mu}^{-1}_r}\,\dd r}{z} \right]=\E[\ind_A\circ\theta_{-\hat{\mu}^{-1}_0}],
\]
since by Lemma~\ref{prop:levelind} the distribution of $\theta_{-\hat{\mu}^{-1}_r}(\hat X,\hat \mu)$ is independent of $r$. This yields $\E[\ind_A\circ\theta_{-\hat{\mu}^{-1}_0}]=\P^{\circ}_{\hat\mu}(A)$ for any Borel-set $A$ and the proof is complete.
\end{proof}

 A shift-covariant random measure $\zeta$ is called \emph{mass-stationary} under $\P$, if for any bounded Borel set $C$ with $\lambda(C)>0$ and $\lambda(\partial C)=0$ and all measurable $h:\Omega\times\R \to\R_{\geq 0}$
 \begin{equation}\label{eq:ms}
 	\begin{aligned}
	\int_\Omega \int_C\frac{\int_{C-u} h(g_s\omega,s+u)\,\zeta(\omega,\dd s)}{\zeta(\omega,C-u)}& \,\dd u\,\P(\dd \omega)\\ 
	& = \int_\Omega\int_C h(\omega,u)\,\dd u\,\P(\dd\omega).
 	\end{aligned}
 \end{equation}
It is well known, that mass-stationary and shift-invariance with respect to hitting times of $(\zeta_t)$ characterise Palm measures.
 \begin{propo}[{\cite[Theorem 3.1]{last2014unbiased}, cf.\ \cite{geman1973remarks}}]\label{prop:shift-palm}
 Suppose $\zeta$ denotes a shift-covariant random measure on $(\Omega,\mathcal{A})$ and let $(\zeta^{-1}_x)_{x\in\R}$ be the right-continuous inverse of $(\zeta_t)_{t\in\R}$. Let $\Q$ be any measure on $\mathcal{A}$ such that $\zeta(\omega)$ is a non-trivial diffuse random measure for $\Q$-a.e.\ $\omega$. The following statements are equivalent:
 \begin{enumerate}[a)]
 	\item $\Q$ is the Palm measure of some invariant measure $\P$ with respect to $\zeta$.
 	\item We have \[
 	\Q \circ g_{\zeta^{-1}_x} = \Q,\quad x\in \R.
 	\]
 	\item $\zeta$ is mass-stationary under $\Q$.
 \end{enumerate}
 \end{propo}
 Via Lamperti representation, we now immediately obtain an equivalent characterisation result in terms using rescalings instead of shifts.
  \begin{propo}\label{prop:scale-palm}
 	Fix $H\in(0,1)$ and Let $\xi$ denote a $\S(H)$-covariant random measure on $(\Omega,\mathcal{A})$ with right continuous inverse $(\xi^{-1}_x)_{x\geq 0}$. Let $\Q$ be any measure on $\mathcal{A}$ such that $\xi(\omega)$ is a non-trivial diffuse random measure for $\Q$-a.e.\ $\omega$. The following statements are equivalent:
 	\begin{enumerate}[a)]
 		\item $\Q$ is the Palm measure of some invariant measure $\PP$ with respect to $\hat\xi$.
 		\item We have \[
 		\Q \circ g_{\log(\xi^{-1}_x)} = \Q \circ g_{\hat{\xi}^{-1}_x} =\Q \circ g_{\hat{\xi}^{-1}_{-x}} = \Q,\quad x>0.
 		\]
 		\item For any bounded Borel set $A\subset(0,\infty)$ with $\int_A \dd x>0$ and $\int_{\partial A}\dd x=0$ and all measurable $\ell:\Omega\times\R \to\R_{\geq 0}$
 		\begin{equation}\label{eq:massscale}
 			\begin{aligned}
 				\int_\Omega \int_A & \frac{\int_{Av^{-1}} \ell(g_{\log s}\omega, \log (sv))\,s^{-H}\xi(\dd s)}{v\,\int_{Av^{-1}}s^{-H}\xi(\dd s)} \,\dd v\,\Q(\dd \omega)\\ 
 				& = \int_\Omega\int_A \ell(\omega,\log v) v^{-1}\,\dd v\,\Q(\dd\omega).
 			\end{aligned}
 		\end{equation}
 	\item For any $B\in\mathcal{A}$ and fixed compact $A\subset (0,\infty)$ with $\lambda(A)>0$ and $\lambda(\partial A)=0$ we have 
 			\[
 			\Q(B)=\frac{\int_\Omega \int_A \ind_B (g_{\log s})s^{-H}\xi(\dd s)\,\dd v \,\P(\dd\omega) }{\int_A s^{-1}\,\dd s},
 			\]
 			for some invariant measure $\P$.
 	\end{enumerate}
 \end{propo}
 \begin{proof}
 The equivalence of a) and b) is immediate from Proposition~\ref{prop:shift-palm} applied to the stationary random measure $\hat\xi$ (note that shift-covariance of $\hat\xi$ follows from $\bar{L}_H\circ\s_r=\theta_{\log r}\circ \bar{L}_H$) and noting that, for $x>0$, 
 \begin{align*}
 \hat{\xi}^{-1}_x & =\inf\Big\{t:\int_{1}^{\textup{e}^{t}}s^{-H}\xi(\dd s)>x\Big\}\\
 & = \log\Big(\inf\Big\{u:\int_{1}^{u}s^{-H}\xi(\dd s)>x\Big\}\Big)\\
 &= \log\big(\xi^{-1}_y\big)
 \end{align*}
for some $y=y(x)>0$, since the support of $s^{-H}\xi$ coincides with the support of $\xi$. The equivalence of a) and c) is simply the mass-stationarity~\eqref{eq:ms} of $\hat{\xi}$ expressed in terms of $\xi$: it is straightforward to see that, for any measurable non-negative function $f$, $\int f(s)\hat{\xi}(\dd s)=\int \hat{f}(s)s^{-H}{\xi}(\dd s)$ for $\hat{f}=f\circ\log$. Hence, setting $C=\{\textup{e}^x, x\in A\}$ and $h=\ell\circ\exp$ the formula \eqref{eq:massscale} follows from \eqref{eq:ms} by substitution. Using the same argument for indicator functions, one obtains d) from the definition \eqref{eq:Palm} of the Palm measure with respect to $\hat{\xi}$ in terms of $\xi$.
 \end{proof}
In fact, if $\Q$ satisfies any of the equivalent conditions in Propositions~\ref{prop:shift-palm} or~\ref{prop:scale-palm}, then $\P$ is uniquely determined by $\Q$ \cite[Thm. 3.1]{last2014unbiased}. 
\begin{remark}
The renormalisation term $\int_A s^{-1}\,\dd s$ in Proposition~\ref{prop:scale-palm} d) reflects the fact that the Haar measure on the multiplicative group $(\R_{>0},\cdot)$ (which we may identify with $\S(H)$ for given $H$) has Lebesgue density $s^{-1}$ and that the intensity measure of any $H$-self-similar measure $\xi$ is necessarily proportional to $s^{H-1}\dd s$. The additional density term $s^{-H}$ in c) and d) accounts for the spatial rescaling, which is not accounted for if we let $(\R_{>0},\cdot)$ act directly on random measures via its canonical action on $\R$ (as can be done for the shift-group $\Theta$).
\end{remark} 
The representation of the co-ascent as a Palm distribution is now a consequence of Proposition~\ref{prop:scale-palm}.
Theorem~\ref{thm:ascispalmweak} provides an indirect characterisation of $\Xa$ as the image under $L_H^{-1}$ of a Palm distributed process. We now extend this result by using Proposition~\ref{prop:scale-palm} to establish a direct characterisation. 
\begin{cor}\label{thm:ascispalm}
Let $\P$ be invariant and ergodic and let $X:\Omega\to\mathcal{C}_0(\R_{\geq 0},\R)$ denote a $\S(H)$-covariant map, thus defining an self-similar process of index $H$. Let $\mu$ denote the record measure of $X$ and let $T_x=\mu_x^{-1}, x\in\R_{\geq 0}$ denote the corresponding first passage times. If $\mu$ has finite intensity, then $\P\circ g_{\log T_1}$ is the Palm distribution of $\P$ with respect to $\mu$.
\end{cor}
\begin{proof}We have
	\[
	\P\circ g_{\log T_1}=\P\circ g_{\hat{\mu}^{-1}_0}=\Q_{\hat{\mu}},
	\]
	where the last inequality follows from the proof of Theorem~\ref{thm:ascispalmweak}.
\end{proof}
We conclude this section by deducing Theorem~\ref{thm:projection}.

\begin{proof}[ of Theorem~\ref{thm:projection}]
Corollary~\ref{thm:ascispalm} identifies $\Xa$ as Palm distributed and Proposition~\ref{prop:scale-palm} b) now entails that the Palm distribution is invariant under rescalings by first passage times.
\end{proof}

\section{Remarks on related problems}
We conclude our considerations with discussing two related open problems.
\subsection*{Two-sided processes with stationary increments}
In \cite{Moe18}, mass-stationarity was used to derive the strong asymptotics of the quantity $\PP(\ell((0,t])\leq 1)$ as $t\to\infty$, where $\ell$ is the local time measure at $0$ of an $H$-self-similar process $X$ with stationary increments. The following related problem still remains open, see also \cite{molchan1999maximum,aurzada2016persistence,aurzada2018persistence}: 
\begin{prob}\label{prob1}
Let $(X_{t})_{t\in\R}$ be a two-sided continuous $H$-self-similar process with stationary increments satisfying $\EE\sup_{0\leq s\leq 1}X_s<\infty$. Can we obtain the strong order of $\PP(X_s \leq 1, 0\leq s \leq t)$? More precisely, does there exist a constant $c_X$, satisfying
\[
\lim_{t\to\infty}\PP(X_s \leq 1, 0\leq s \leq t) t^{1-H}=c_X,
\]
and if so, how can $c_X$ be characterised in terms of $X$?
\end{prob}
In the special cases where $X$ is fractional Brownian motion with Hurst index $H\in (\nicefrac{1}{2},1)$ \cite{aurzada2018persistence} and where $X$ is the Rosenblatt process \cite{aurzada2016persistence} the upper bound $\PP(X_s \leq 1, 0\leq s \leq t) t^{1-H}\leq C$ is known to hold. Note that to calculate the persistence probabilities one may work with the co-ascent instead of the original process.
\begin{propo}\label{prop:easy}
	Let $\Xa$ denote the co-ascent process of some continuous $H$-self-similar process $X$. Then, for any $x>0$,
	\[
	\PP(\Xa_t \leq x, 0\leq t\leq 1)=\PP(X_t\leq x, 0\leq t\leq 1)
	\]
\end{propo}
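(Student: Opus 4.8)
The plan is to reduce both probabilities to the distribution of a single scalar quantity, exploiting the fact that the co-ascent attains its running maximum at the right endpoint of $[0,1]$. First I would observe that, since $T_1$ is the first hitting time of level $1$ and $X$ is continuous with $X_0=0$, one has $X_u<1$ for $u<T_1$ and $X_{T_1}=1$, so that $M_{T_1}=1$. Consequently
\[
\sup_{0\le t\le 1}\Xa_t = T_1^{-H}\sup_{0\le u\le T_1}X_u = T_1^{-H}M_{T_1}=T_1^{-H}=\Xa_1,
\]
and the left-hand event becomes $\{\sup_{0\le t\le1}\Xa_t\le x\}=\{T_1^{-H}\le x\}$. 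The right-hand event is simply $\{M_1\le x\}$. Hence the proposition is equivalent to the distributional identity $M_1\overset{d}{=}T_1^{-H}$, the standard scaling duality between the running supremum of a self-similar process at a fixed time and its first passage time at a fixed level.

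To establish this identity I would use the $\nicefrac1H$-self-similarity of the inverse process $T$ noted in the setup. By the path properties of a continuous $X$ started at $0$, the events $\{M_1<x\}$ and $\{T_x>1\}$ coincide exactly for every $x>0$: if the running maximum up to time $1$ is strictly below $x$, then $x$ is not hit by time $1$; conversely, if $x$ is first hit strictly after time $1$, then by continuity $X$ stays strictly below $x$ throughout $[0,1]$, so its (attained) maximum is strictly below $x$. Combining this with $T_x\overset{d}{=}x^{1/H}T_1$ gives
\[
\PP(M_1<x)=\PP(T_x>1)=\PP\bigl(x^{1/H}T_1>1\bigr)=\PP\bigl(T_1^{-H}<x\bigr),
\]
valid for all $x>0$, where the last step uses that $y\mapsto y^{-H}$ is decreasing. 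Since a law on $(0,\infty)$ is determined by the function $x\mapsto\PP(\,\cdot\,<x)$, this yields $M_1\overset{d}{=}T_1^{-H}$ and hence $\PP(M_1\le x)=\PP(T_1^{-H}\le x)$ for all $x$, so that the two sides of the proposition agree.

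The only genuinely delicate point is the treatment of the boundary set $\{M_1=x\}$: the naive identification $\{M_1\le x\}=\{T_x\ge1\}$ can fail there, since the supremum may equal $x$ while being attained before time $1$. The hard part is therefore purely bookkeeping, and I would sidestep it by phrasing the path identity with strict inequalities, as above, and then invoking the fact that the left-continuous distribution function determines the law; this avoids having to argue separately that $M_1$ has no atom at $x$. Everything else is a direct application of the scaling recorded above, so I expect the proof to be short once the endpoint-maximum observation and the strict-inequality argument are in place.
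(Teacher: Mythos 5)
Your proof is correct and follows essentially the same route as the paper: both reduce the left-hand side to $\PP(\Xa_1\le x)=\PP(T_1^{-H}\le x)$ via the observation that the co-ascent attains its maximum on $[0,1]$ at the endpoint, and then use the first-passage duality between $M_1$ and $T_x$ together with the $\nicefrac{1}{H}$-self-similarity of $T$ to identify this with $\PP(M_1\le x)$. The only difference is that you handle the boundary event $\{M_1=x\}$ carefully, via strict inequalities and the fact that the left-continuous distribution function determines the law, whereas the paper simply writes $\PP(T_x\geq 1)=\PP(M_1\leq x)$ without comment; your version is a more rigorous rendering of the same argument.
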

\begin{proof}
	By definition, the co-ascent process on $[0,1]$ reaches its maximum at $1$. Hence,
	\[
	\PP(\Xa_t \leq x, 0\leq t\leq 1)=\PP(\Xa_1 \leq x)=\PP(X_{T_1}\leq x T^H_1).
	\]
	But $X_{T_1}=1$ a.s. and thus
	\[
	\PP(\Xa_t \leq x, 0\leq t\leq 1)=\PP(T_1\geq x^{-\frac{1}{H}})=\PP(T_x\geq 1)=\PP(M_1\leq x).
	\]
\end{proof}

Understanding $\PP(X_s \leq 1, 0\leq s \leq t)$ is a step towards defining a version of $X$ conditioned not to return to $0$. The (reversed) co-ascent process can be interpreted as a natural choice for a process derived from (the one sided process) $X$ which does not return to a given level. However, if there was a probability measure on paths that is mass-stationary (in the ordinary sense, i.e.\ under shifts) for the (one-sided) record time measure, then the corresponding (Palm distributed) process is a natural way to induce a change of measure under which paths do never return to a given level. Unfortunately, it is not clear whether such a process exits in general.
\begin{prob}\label{prob2}
	Let $(X_{t})_{t\in\R}$ be a two-sided continuous $H$-self-similar process with stationary increments satisfying $\EE\sup_{0\leq s\leq 1}X_s<\infty$. Is there a two-sided process $X^{\textup m}$ derived from $X$ in a natural way satisfying 
	\[
	(X^{\textup m}_{T_x+t}-x)_{t\in \R} \overset{d}{=}(X^{\textup m}_t)_{t\in\R},
	\]
	where $T_x$ denotes the first hitting time of level $x$ after $0$ of $X^{\textup m}$? 
\end{prob}

\subsection*{Acknowledgment}
The author would like to express his gratitude to Steffen Dereich and Zachar Kabluchko for an interesting exchange at the University of M\"unster about how to employ mass-stationarity to tackle Problems~\ref{prob1} and~\ref{prob2}, which motivated this research; to Frank Aurzada and Lisa Hartung for valuable discussions and comments on early drafts of this manuscript; and to Hugo Panzo and Martin Kilian for pointing out several mistakes in an earlier version.

\bibliography{SSSI} 
\bibliographystyle{plain}

\end{document}